\theoremstyle{plain}
\newtheorem{theorem}{Theorem}[section]
\newtheorem{lemma}[theorem]{Lemma}
\newtheorem{mydef*}{Definition*}
\theoremstyle{definition}
\newtheorem{example}[theorem]{Example}
\theoremstyle{remark}
\newtheorem{remark}{Remark}
\begin{document}
\title{C(X) determines X - an inherent theory}
\author{
\name{Biswajit Mitra and Sanjib Das$^1$}
\thanks{CONTACT Email:  bmitra@math.buruniv.ac.in \&ruin.sanjibdas893@gmail.com}
\affil{Department of Mathematics, The University of Burdwan, India 713104
\& Department of Mathematics, The University of Burdwan, India 713104}}
\thanks{$^1$The second author's research grant is supported by CSIR, Human Resource
Development group, New Delhi-110012, India }

\maketitle

\begin{abstract}
One of the fundamental problem in rings of continuous function is to extract those spaces for which $C(X)$ determines $X$, that is to investigate $X$ and $Y$ such that $C(X)$ isomorphic with $C(Y)$ implies $X$ homeomorphic with $Y$. The development started back from Tychonoff who first pointed out inevitability of Tychonoff space in this category of problem. Later S.Banach and M. Stone proved independently with slight variance, that if $X$ is compact Hausdorff space, $C(X)$ also determine $X$. Their works were maximally extended by E. Hewitt by introducing realcompact spaces and later Melvin Henriksen and Biswajit Mitra solved the problem for locally compact and nearly realcompact spaces. In this paper we tried to develop an inherent theory of this problem to cover up all the works in the literature introducing a notion so called $\mathcal{P}$-compact spaces.
\end{abstract}

MSC (2020): Primary 54C30; Secondary 54D60, 54D35.
\begin{keywords}
Realcompact, nearly realcompact, real maximal ideal, SRM ideal, $\mathcal{P}$-maximal ideal,$\mathcal{P}$-compact space, Structure space
\end{keywords}

\section{introduction}

In this paper we tried to develop an inherent theory of $C(X)$ determining $X$ in the sense that under what general condition(s), $C(X)$ isomorphic with $C(Y)$ implies that $X$ is homeomorphic with $Y$. It is very age-old and fundamental problem in the area of rings of continuous functions. Tychonoff proved that for any space $X$, there exists a Tychonoff space $Y$ such that $C(X)$ is isomorphic with $C(Y)$. Thus Tychonoff directed us to investigate this problem within the class of Tychonoff spaces, we shall therefore always stick ourselves within the class of Tychonoff spaces unless otherwise mentioned. Banach and Stone independently proved with slight variance that if $X$ is compact and Hausdorff space, $C(X)$ determines $X$. In the year 1948, E hewitt inroduced realcompact space and proved that within the class of realcompact space, $C(X)$ determines $X$ [\cite{gj60} Theorem 8.3] . His theory in some sense maximal as because he had shown that for any space $X$, $C(X)$ is isomorphic with $C(\upsilon X)$ [\cite{gj60}, Theorem 8.8(a)] , where $\upsilon(X)$ is the Hewitt-realcompactification of $X$. This results particularly highlights that we can never extend ``C(X) determining $X$"-type statement to the generalized notion of realcompact spaces, like nearly realcompact etc. That is why, in the year 2005, Henriksen and Mitra proved that if $X$ is locally compact and nearly realcompact, then $C(X)$ determines $X$.

In this paper we tried to build a common theory which includes all the developments so far. We have defined a subset $\mathcal{P}$ of the family of maximal ideals to be algebraic. Members of $\mathcal{P}$ are called $\mathcal{P}$-maximal ideal. A space $X$ is $\mathcal{P}$-compact if every $\mathcal{P}$-maximal ideal is fixed. A space $X$ is called locally $\mathcal{P}$ if every fixed maximal ideal is $\mathcal{P}$-maximal ideal. Both the notion of $\mathcal{P}$-compact and locally $\mathcal{P}$ for an algebraic set $\mathcal{P}$ are topological. We finally proved that if $C(X)$ is isomporphic with $C(Y)$ and both $X$ and $Y$ are $\mathcal{P}$-compact and locally $\mathcal{P}$, then $X$ is homeomorphic with $Y$.

\section{Preliminaries}
We used most of the preliminary ideas, symbols
and terminologies from the classic monograph of Leonard Gillman and Meyer Jerison,
Rings of Continuous Functions \cite{gj60}. Although we recall some   notations and concepts which are  used several times over here. For any $f\in C(X)$
or $C^{*}(X)$, $Z(f)=\{x\in X:f(x)=0\}$, called zero set of $f$ and
the complement of zero set is called cozero set or cozero part of $f$, denoted as $cozf$. For $f\in C(X)$, the set $cl_{X}(X\backslash Z(f))$ is known as the support of $f$. If $h$ is a homomorphism from $C(X)$ or $C^*(X)$ into $C(Y)$, then the image of a bounded function on $X$ is a bounded function on $Y$ under $h$. A space is called pseudocompact if $C(X)=C^*(X)$. A maximal ideal $M$ in $C(X)$ is called real maximal if $C(X)/ M$ is isomorphic with $\mathbb{R}$, otherwise is called hyper-real. A maximal ideal is called fixed if there exists $x\in X$ such that $f(x) =0$, for all $f\in M$, usually denoted as $M_x$, Every fixed maximal ideal is real. However converse may not be true. A space is realcompact if every real maximal ideal in $C(X)$ is fixed. B. Mitra and S.K. Acharyya introduced the ring $\chi(X)$ in their paper \cite{ma05}. $\chi(X)$ is the smallest subring of $C(X)$ consisting of $C^*(X)$ and $C_{H}(X)$, where $C_{H}(X)=\{f\in C(X)|cl_{X}(X\backslash Z(f))$ is hard in $X\}$. A subset $H$ of $X$ is hard in $X$ if it is closed in $X\cup cl_{\beta X}$.

 They proved the following theorem

 \begin{theorem}
 A space $X$ is nearly pseudocompact if and only if $\chi(X)=C^*(X)$.
\end{theorem}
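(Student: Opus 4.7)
My plan is to prove both implications using the standard characterization that $X$ is nearly pseudocompact iff $\upsilon X\setminus X$ is dense in $\beta X\setminus X$, together with the reading of hardness of $H=cl_X(cozf)$ as the condition $cl_{\beta X}(H)\cap\upsilon X\subseteq X$. The bridge between the two notions is the observation that hardness of $cl_X(cozf)$ forces the continuous extension $\hat f$ of $f$ to $\upsilon X$ to vanish on $\upsilon X\setminus X$: any $p\in\upsilon X\setminus X$ with $\hat f(p)\neq 0$ would lie in $cl_{\upsilon X}(cozf)\subseteq cl_{\beta X}(cozf)\subseteq cl_{\beta X}(cl_X(cozf))$, yet hardness forbids points of $\upsilon X\setminus X$ in this last closure.

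For the forward direction, assume $X$ is nearly pseudocompact and pick $f\in C_H(X)$. By the bridge, $\hat f\equiv 0$ on $\upsilon X\setminus X$. Suppose toward a contradiction that $f$ is unbounded, so there exist $x_n\in X$ with $|f(x_n)|\to\infty$ and a $\beta X$-cluster point $q$ of $(x_n)$; finiteness of $\hat f$ on $\upsilon X$ forces $q\in\beta X\setminus\upsilon X$. The disjoint zero sets $\{|f|\leq 1\}$ and $\{|f|\geq 2\}$ have disjoint $\beta X$-closures, so one finds an open $V\ni q$ in $\beta X$ with $V\cap X\subseteq\{|f|>1\}$. Density of $\upsilon X\setminus X$ in $\beta X\setminus X$ supplies $p\in V\cap(\upsilon X\setminus X)$, and approximating $p$ by a net in $V\cap X$ while using continuity of $\hat f$ gives $|\hat f(p)|\geq 1$, contradicting $\hat f(p)=0$. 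Hence $f\in C^*(X)$, so $C_H(X)\subseteq C^*(X)$ and $\chi(X)=C^*(X)$.

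For the converse I would argue by contraposition. Failure of near pseudocompactness gives an open $V\subseteq\beta X$ with $\emptyset\neq V\cap(\beta X\setminus X)\subseteq\beta X\setminus\upsilon X$; by regularity of $\beta X$ shrink to an open $V'$ with $cl_{\beta X}(V')\subseteq V$ and fix $p\in V'\cap(\beta X\setminus\upsilon X)$. Since $p\notin\upsilon X$, pick $g\in C(X)$ that is unbounded on every $X$-neighbourhood of $p$. Choose a Urysohn function $\phi\in C(\beta X,[0,1])$ with $\phi(p)=1$ and $\phi\equiv 0$ off $V'$, and set $f:=g\cdot(\phi|_X)$. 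Then $cl_X(cozf)\subseteq cl_X(V'\cap X)$ and $cl_{\beta X}(cl_X(V'\cap X))=cl_{\beta X}(V')$ meets $\beta X\setminus X$ only inside $\beta X\setminus\upsilon X$, so $cl_X(cozf)$ is hard and $f\in C_H(X)$; meanwhile $\phi(p)=1$ together with unboundedness of $g$ near $p$ makes $|f|\to\infty$ along a sequence in $X$ converging to $p$, so $f\notin C^*(X)$. This witnesses $\chi(X)\neq C^*(X)$.

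The main obstacle is the converse, where one must simultaneously secure hardness of the support and unboundedness of the function, two requirements that pull in opposite directions. The construction succeeds because failure of near pseudocompactness is precisely what opens up a region of $\beta X\setminus X$ lying inside $\beta X\setminus\upsilon X$: this region serves both as a \emph{hard shield} for the support of $f$ (via the Urysohn cutoff $\phi$) and as a \emph{reservoir of ideal points} where some $C(X)$-function is forced to be infinite (supplying the unbounded factor $g$).
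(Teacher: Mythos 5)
The paper does not actually prove this theorem: it is quoted from Mitra--Acharyya \cite{ma05} as background, so there is no in-paper argument to compare yours against. Judged on its own merits, your proof is essentially correct and is the natural argument from the definitions. The forward direction correctly extracts from hardness of $cl_X(cozf)$ that the extension of $f$ to $\upsilon X$ vanishes on $\upsilon X\setminus X$, and then plays density of $\upsilon X\setminus X$ in $\beta X\setminus X$ against a $\beta X$-cluster point of an unbounding sequence; the converse correctly exploits an open $V\subseteq\beta X$ witnessing the failure of density, a point $p\in V\cap(\beta X\setminus\upsilon X)$ (which is where every nonempty trace $V\cap(\beta X\setminus X)$ must live), a $g\in C(X)$ whose Stone extension is infinite at $p$, and a Urysohn cutoff supported in a shrunken $V'$ to confine the support of $f=g\cdot(\phi|_X)$ while keeping $f$ unbounded.

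One point you should tighten. Your stated ``reading'' of hardness, $cl_{\beta X}(H)\cap\upsilon X\subseteq X$, is only a \emph{consequence} of Rayburn's definition (closedness of $H$ in $X\cup cl_{\beta X}(\upsilon X\setminus X)$ --- the paper's own statement of this definition is visibly garbled at exactly this spot), and is not obviously equivalent to it: $cl_{\beta X}(H)$ could avoid $\upsilon X\setminus X$ yet still meet $cl_{\beta X}(\upsilon X\setminus X)$ at points of $\beta X\setminus\upsilon X$. This is harmless in the forward direction, where you only need the weaker consequence of hardness, but in the converse you must certify hardness in the full sense to conclude $f\in C_H(X)$. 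Fortunately your construction already delivers this: since $V$ is open and disjoint from $\upsilon X\setminus X$, it is disjoint from $cl_{\beta X}(\upsilon X\setminus X)$ as well, so $cl_{\beta X}(cl_X(cozf))\subseteq cl_{\beta X}(V')\subseteq V$ misses all of $cl_{\beta X}(\upsilon X\setminus X)\setminus X$, and $cl_X(cozf)$ is hard in Rayburn's sense. Add that one observation and the argument is complete.
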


In the same paper \cite{ma05}, Mitra and Acharyya defined hard pseudocompact space. A space is said to be hard pseudocompact if $C(X)=\chi(X)$. Then it is evident, hard pseudocompactness and nearly pseudocompactness together imply pseudocompactness of a space $X$ and vice versa.. Later Ghosh and Mitra in \cite{gm12} worked in detail over hard pseudocompact spaces and their properties.

Henriksen and Mitra intoduced Strongly real maximal ideal (in brief SRM ideal) in \cite{hm05}. A maximal ideal $M$ of $C(X)$ is called SRM ideal if there exists $g\notin M$ such that $fg\in C^*(X)$ for all $f \in C(X)$. There is no in general any connection between real maximal ideal and SRM ideal. Infact not all fixed maximal ideal is SRM. If $X$ is locally compact, all fixed maximal ideals are SRM.  However SRM ideals helps to characterize nearly realcompact space.

\begin{theorem}
A space $X$ is nearly realcompact if and only if every SRM ideal is fixed.
\end{theorem}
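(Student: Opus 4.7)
My plan is to translate the algebraic SRM condition on a hyper-real maximal ideal $M^{p}$ (indexed by $p\in\beta X$) into a topological position statement about $p$ in $\beta X$, and to recognise that statement as exactly what near realcompactness of $X$ asserts. Concretely, I expect
\[
M^{p}\text{ is SRM}\iff p\in\mathrm{int}_{\beta X}(\upsilon X),
\]
which, together with the characterization of near realcompactness as $\beta X\setminus\upsilon X$ being dense in $\beta X\setminus X$ (equivalently $\mathrm{int}_{\beta X}(\upsilon X)\subseteq X$), delivers the theorem.

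\textbf{Necessity.} Let $X$ be nearly realcompact and let $M^{p}$ be an SRM ideal with witness $g\notin M^{p}$ such that $fg\in C^{*}(X)$ for every $f\in C(X)$. Taking $f=g$ forces $g^{2}$, and hence $g$, to be bounded, so $g\in C^{*}(X)$ extends to $g^{\beta}\in C(\beta X)$ with $g^{\beta}(p)\neq 0$. I claim $g^{\beta}\equiv 0$ on $\beta X\setminus\upsilon X$. Indeed, for any $q\in\beta X\setminus\upsilon X$ the defining property of $\upsilon X$ supplies an $f\in C(X)$ together with a net $x_{\alpha}\to q$ in $\beta X$ for which $|f(x_{\alpha})|\to\infty$. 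If $g^{\beta}(q)\neq 0$, then $|g^{\beta}|\geq\varepsilon$ on some neighbourhood $V$ of $q$; since $x_{\alpha}$ is eventually in $V$, we obtain $|f(x_{\alpha})g(x_{\alpha})|\geq\varepsilon|f(x_{\alpha})|\to\infty$, contradicting $fg\in C^{*}(X)$. Hence $g^{\beta}(q)=0$. Continuity of $g^{\beta}$ together with $g^{\beta}(p)\neq 0$ now places $p$ in the open set $\{g^{\beta}\neq 0\}\subseteq\upsilon X$, i.e.\ $p\in\mathrm{int}_{\beta X}(\upsilon X)$; near realcompactness then forces $p\in X$, so $M^{p}$ is fixed.

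\textbf{Sufficiency.} Conversely, assume every SRM ideal of $C(X)$ is fixed and suppose, for contradiction, that $X$ is not nearly realcompact. Pick $p\in\mathrm{int}_{\beta X}(\upsilon X)\setminus X$. By complete regularity of $\beta X$, choose $h\in C^{*}(\beta X)$ with $h(p)\neq 0$ and $\mathrm{supp}_{\beta X}(h)\subseteq\mathrm{int}_{\beta X}(\upsilon X)\subseteq\upsilon X$, and put $g=h|_{X}\in C^{*}(X)$. Then $g\notin M^{p}$. For every $f\in C(X)$ the Hewitt extension $f^{\upsilon}$ is real-valued and continuous on the compact set $K:=\mathrm{supp}_{\beta X}(h)\subseteq\upsilon X$, hence bounded on $K$; since $g$ vanishes off $K\cap X$, this forces $fg\in C^{*}(X)$. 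Thus $M^{p}$ is SRM yet free, contradicting the hypothesis.

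\textbf{Main obstacle.} The delicate step is the vanishing assertion $g^{\beta}\equiv 0$ on $\beta X\setminus\upsilon X$: it requires producing, at each $q\in\beta X\setminus\upsilon X$, a function $f\in C(X)$ whose blow-up along some net converging to $q$ can be paired with the positive lower bound on $|g^{\beta}|$ to contradict boundedness of $fg$. Once that is in place the rest is bookkeeping -- reading off that $p\in\mathrm{int}_{\beta X}(\upsilon X)$ in the forward direction, and in the reverse direction choosing the Urysohn witness $h$ in $\beta X$ so that its support lies inside $\upsilon X$.
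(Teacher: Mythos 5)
The paper does not actually prove this statement; it is quoted from Henriksen--Mitra [hm05, Theorem 2.9], so there is no in-paper argument to compare against. Your overall strategy --- showing that $M^{p}$ is SRM exactly when $p\in\mathrm{int}_{\beta X}(\upsilon X)$ and reading off the theorem from the reformulation of near realcompactness as $\mathrm{int}_{\beta X}(\upsilon X)\subseteq X$ --- is sound, and your sufficiency half is complete and correct: the Urysohn function $h$ on $\beta X$ with support inside $\mathrm{int}_{\beta X}(\upsilon X)$ does produce a free SRM ideal when $X$ fails to be nearly realcompact.

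There is, however, a genuine gap in the necessity half, at the sentence ``so $g\in C^{*}(X)$ extends to $g^{\beta}\in C(\beta X)$ with $g^{\beta}(p)\neq 0$.'' For a bounded $g$, the condition $g\notin M^{p}$ means $p\notin\mathrm{cl}_{\beta X}Z(g)$, and this does \emph{not} imply $g^{\beta}(p)\neq 0$: the equality $M^{p}\cap C^{*}(X)=\{f\in C^{*}(X):f^{\beta}(p)=0\}$ holds precisely when $p\in\upsilon X$ (Gillman--Jerison 7.9), which is essentially what you are in the middle of proving, so you may not assume it. Concretely, on $X=\mathbb{N}$ the function $g(n)=1/n$ has $Z(g)=\emptyset$, hence lies outside every $M^{p}$, yet $g^{\beta}$ vanishes at every point of $\beta\mathbb{N}\setminus\mathbb{N}$. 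The assertion is true for an SRM witness, but it costs one more application of the hypothesis: choose $u\in C(\beta X)$ with $0\le u\le 1$, $u(p)=1$ and $u=0$ on $\mathrm{cl}_{\beta X}Z(g)$; then $s=|g|+\bigl(1-\min(2u,1)\bigr)\big|_{X}$ is strictly positive on $X$, so $1/s^{2}\in C(X)$, and boundedness of $g/s^{2}$ (say by $N$) gives, on $\{u\ge 1/2\}\cap X$ where $s=|g|$, the lower bound $|g|\ge 1/N$; hence $|g^{\beta}|\ge 1/N$ on the neighbourhood $\{u>1/2\}$ of $p$, and in particular $g^{\beta}(p)\neq 0$. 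With this lemma inserted, the rest of your necessity argument (the vanishing of $g^{\beta}$ off $\upsilon X$, which you do prove correctly, and the conclusion $p\in\mathrm{int}_{\beta X}(\upsilon X)\subseteq X$) goes through.
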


In $C(X)$, more generally in commutative ring with 1, if $\mathcal{M}(X)$ is the collection of all maximal ideal of $C(X)$, naturally topologized with Hull-kernel topology. then $\mathcal{M}(X)$ with this topology is called structure space of $C(X)$. In this structure space $\mathcal{X}_f= \{ M\in \mathcal{M}(X): f\in M\}$ forms a base for closed sets and the structure space turns out to be compact $T_1$ space. In general structure space of a commutative ring with 1 may not be Hausdorff but the structure space of $C(X)$ or any subring of $C(X)$ containing $C^*(X)$, henceforth to be referred as intermediate subrings of $C(X)$,  turns out to be Hausdorff.

Byun and Watson in \cite{bw91} discussed different properties an intermediate subrings of $C(X)$ almost similar to that of $C(X)$. As for instance, any intermediate subring $A(X)$ of $C(X)$ is a lattice ordered ring. Any maximal ideal of $A(X)$ is absolutely convex. $A(X)/M$ is a totally ordered field containing $\mathbb{R}$ as a totally ordered subfield. For each $x \in X$, $M_x^A=\{f\in A(X): f(x)=0\}$ is precisely the collection of fixed maximal ideal of $A(X)$. A maximal ideal of $A(X)$ is real if $A(X)/M$ is isomorphic with $\mathbb{R}$. Every fixed maximal ideal of $A(X)$ is real. Redlin and Watson in \cite{rw87} defined a space to be $A$-compact if every real maximal ideal of $A(X)$ is fixed and proved the following theorem [\cite{rw87}, Theorem 7].

\begin{theorem}\label{delhi}
Let $X$ be $A$-compact and $Y$ be $B$-compact. If $A(X)$ is isomorphic with $B(X)$, then $X$ is homeomorphic with $Y$.
\end{theorem}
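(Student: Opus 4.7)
The plan is to mimic the classical Gelfand--Kolmogorov / Stone argument, using the structure space $\mathcal{M}(A(X))$ as an intermediary between $X$ and $A(X)$.

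First I would fix an isomorphism $\Phi \colon A(X) \to B(Y)$ and note that, because the hull--kernel topology on the set of maximal ideals is defined purely in ring-theoretic terms (basic closed sets $\mathcal{X}_f = \{M : f \in M\}$), the induced map $\widehat{\Phi} \colon \mathcal{M}(A(X)) \to \mathcal{M}(B(Y))$, $M \mapsto \Phi(M)$, is a homeomorphism of compact Hausdorff spaces. Next I would verify that $\widehat{\Phi}$ carries real maximal ideals onto real maximal ideals: if $M$ is real then $A(X)/M \cong \mathbb{R}$, and $\Phi$ descends to a field isomorphism $A(X)/M \to B(Y)/\Phi(M)$, so $B(Y)/\Phi(M)\cong\mathbb{R}$ and $\Phi(M)$ is real. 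This yields a homeomorphism between the subspaces of real maximal ideals of $\mathcal{M}(A(X))$ and $\mathcal{M}(B(Y))$.

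Now I would invoke the hypotheses. Since $X$ is $A$-compact, every real maximal ideal of $A(X)$ is fixed, and by the preliminaries each fixed maximal ideal has the form $M_x^A$ for a unique $x \in X$; thus the set of real maximal ideals of $A(X)$ coincides with $\{M_x^A : x \in X\}$. Symmetrically for $B(Y)$. Composing, one obtains a bijection $\psi \colon X \to Y$ characterized by $\Phi(M_x^A) = M_{\psi(x)}^B$.

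The remaining and main step is to promote $\psi$ to a homeomorphism, which amounts to showing that the assignment $x \mapsto M_x^A$ is a topological embedding of $X$ into $\mathcal{M}(A(X))$ (and similarly for $Y$). Here one exploits that $A(X) \supseteq C^{*}(X)$ and $X$ is Tychonoff, so $A(X)$ separates points from closed sets in $X$; a standard argument then identifies the relative hull--kernel topology on $\{M_x^A : x \in X\}$ with the original topology of $X$ via the basic closed sets $\mathcal{X}_f \cap \{M_x^A\} = Z(f)$ for $f \in A(X)$. This is the step I expect to require the most care, since one must confirm that intermediate rings behave like $C(X)$ with respect to the embedding of fixed maximal ideals; granting it, $\psi$ becomes a composition of three homeomorphisms $X \to \{M_x^A\} \xrightarrow{\widehat{\Phi}} \{M_y^B\} \to Y$, completing the proof.
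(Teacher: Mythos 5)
Your proposal is correct and follows essentially the same route as the paper: the paper derives this theorem from its general machinery by taking $\mathcal{P}$ to be the set of real maximal ideals of $A(X)$ (an algebraic set, since reality is detected by the quotient field), noting that every fixed maximal ideal is real so $X$ is locally $\mathcal{P}$, and then composing the homeomorphisms $X \to \mathcal{P} \to \psi^*\mathcal{P} \to Y$ in the hull--kernel topology. Your ``step requiring the most care'' --- that $x \mapsto M_x^A$ embeds $X$ into the structure space via $\mathcal{X}_f \cap \{M_x^A : x \in X\} = Z(f)$ --- is exactly the content of the paper's Lemma on $X\backslash\mathcal{N}_{\mathcal{P}}^A(X)$ being homeomorphic with $\mathcal{P}$, and is handled there by the same computation.
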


\section{Main results}

Let $A(X)$ be an intermediate subring of $C(X)$. Let $\mathcal{M}(A)$ be a family of maximal ideals in $A(X)$. Let $\mathcal{P}$ be a subset of $\mathcal{M}(A)$ . A maximal ideal $M$ is called $\mathcal{P}_A$-maximal ideal if $M\in \mathcal{P}$, otherwise $M$ is called non-$\mathcal{P}_A$-maximal ideal. We call a space $X$ to be $\mathcal{P}_A$-compact if every $\mathcal{P}_A$-maximal ideal is fixed. For $A(X) = C(X)$, we shall simply write $\mathcal{M}(A)$ by $\mathcal{M}(X)$, $\mathcal{P}_C$-maximal ideal by $\mathcal{P}$-maximal and $\mathcal{P}_C$-compact by $\mathcal{P}$-compact.  It is clear that every compact space is $\mathcal{P}$-compact.

\begin{remark}
Notion of $\mathcal{P}_A$-maximal ideal may not be algebraic objects that is may not be preserved under isomorphism. Likewise $\mathcal{P}_A$-compactnes may not be a topological property, that is, may not be preserved under homeomorphism.  For example if we take $\mathcal{P}$ to be the collection of fixed maximal ideals, then $\mathcal{P}$-maximal ideals are not algebraic objects. Here every space turns out to be $\mathcal{P}$-compact space and $\mathcal{P}$-compactness is not a topological property.
\end{remark}

Now let for any space $X$ and $Y$, $A(X)$ and $B(Y)$ be intermediate subrings of $C(X)$ and $C(Y)$ respectively. Let  $\sigma:A(X) \to B(Y) $ be an isomorphism. Then we can lift $\sigma$ to $\sigma^*: \mathcal{P}(\mathcal{M}(A)) \to \mathcal{P}(\mathcal{M}(B)) $ defined by $\sigma^*(\mathcal{E}) = \{\sigma(M): M\in \mathcal{E}\}$.  $\mathcal{Q} \subseteq \mathcal{M}(B)$ is said to be $\sigma$-conjugate of $\mathcal{P} \subseteq \mathcal{M}(A)$ if $\sigma^*(\mathcal{P})=\mathcal{Q}$.   $\mathcal{Q}$ is said to be conjugate of $\mathcal{P}$ if $\mathcal{Q}$ is $\sigma$-conjugate of $\mathcal{P}$  for any isomorphism $\sigma$ between $A(X)$ and $B(Y)$. It is clear that if $\mathcal{Q}$ is conjugate of $\mathcal{P}$ if and only if $\mathcal{P}$ is conjugate of $\mathcal{Q}$.

A subset $\mathcal{P}$ of $\mathcal{M}(A)$ is called algebraic if whenever $A(X)$ is isomorphic with $B(Y)$,for any space $Y$, there exists a subset $\mathcal{Q}$ of $\mathcal{M}(B)$ which is conjugate of $\mathcal{P}$ If $\mathcal{P}$ is algebraic subset of $\mathcal{M}(X)$, then we will keep the same symbol $\mathcal{P}$ for its conjugate, as was justified by theorem \ref{sodepur} and theorem \ref{bangalore} below.

We hereby instantiate few examples of algebraic sets of maximal ideals in $C(X)$.

\begin{example}
\begin{enumerate}

\item If $\mathcal{P}$ is the collection of all maximal ideals of $C(X)$. Then $\mathcal{P}$ is trivially an algebraic set. Then every maximal ideal is then $\mathcal{P}$-maximal ideal and hence $\mathcal{P}$-compact spaces are precisely the compact spaces.

\item If $\mathcal{P}$ is the collection of all maximal ideal of $C(X)$ such that $C(X)/M$ is isomorphic with the field of reals. Then again $\mathcal{P}$ is an algebraic set. Then we know that a maximal ideal is $\mathcal{P}$-maximal ideal if and only if it real maximal ideal introduced by Hewitt in [\cite{h48}] and $\mathcal{P}$-compact spaces are precisely realcompact spaces [\cite{h48}].

\item If $\mathcal{P}$ is the collection of all maximal ideal satisfying the following condition: there exists $f$ outside $M$ such that $fg\in C^*(X)$ for all $g \in C(X)$. Then $\mathcal{P}$ is also algebraic as $C^*(X)$ is invariant under any isomorphism from $C(X)$ to $C(Y)$ .Then $\mathcal{P}$-maximal ideals are precisely the SRM-ideals introduced by Henriksen and Mitra in [\cite{hm05}, Theorem 2.12] and $\mathcal{P}$-compact spaces are precisely the nearly realcompact spaces [\cite{hm05}, Theorem 2.9]
\end{enumerate}
\end{example}

We now topologize $\mathcal{M}(A)$ with the hull-Kernel topology with $\{\mathcal{A}_{f}|f\in A(X)\}$ as its base for closed sets in $\mathcal{M}(A)$, where $\mathcal{A}_{f}=\{M\in\mathcal{M}(A)| \ f\in M\}$  and consider the subspace $\mathcal{P}$ of $\mathcal{M}(A)$. The base for closed sets in $\mathcal{P}$ is of the form $\mathcal{A}_{f}\cap \mathcal{P}$, for all $f \in A(X)$.

\begin{lemma}
If $\psi$ be an isomorphism between $A(X)$ and $B(Y)$ and $\mathcal{P}\subseteq \mathcal{M}(A)$ then $\mathcal{P}$ is homeomorphic with $\psi^*{\mathcal{P}}$.
\end{lemma}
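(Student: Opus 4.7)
The plan is to show that the obvious candidate map, namely the restriction of $\psi^{*}$ to $\mathcal{P}$ sending $M \mapsto \psi(M)$, is the desired homeomorphism. This map is well-defined onto $\psi^{*}(\mathcal{P})$ essentially by construction of $\psi^{*}$, and it is a bijection because $\psi$, being a ring isomorphism, sets up a bijective correspondence between the maximal ideals of $A(X)$ and those of $B(Y)$ (its inverse on the level of maximal ideals being given by $\psi^{-1}$). So set-theoretically nothing is at issue.

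The only real content is continuity of $\psi^{*}$ and of its inverse, and for this I would work directly with the basic closed sets for the hull-kernel topology. A typical basic closed set in the subspace $\psi^{*}(\mathcal{P})$ has the form $\mathcal{B}_{g}\cap \psi^{*}(\mathcal{P})$ with $g \in B(Y)$. Its preimage under $\psi^{*}$ is
\begin{equation*}
\{M\in\mathcal{P} : \psi(M)\in \mathcal{B}_{g}\} = \{M\in\mathcal{P} : g\in \psi(M)\} = \{M\in\mathcal{P} : \psi^{-1}(g)\in M\} = \mathcal{A}_{\psi^{-1}(g)}\cap \mathcal{P},
\end{equation*}
which is a basic closed set in the subspace $\mathcal{P}$ of $\mathcal{M}(A)$. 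Hence $\psi^{*}|_{\mathcal{P}}$ is continuous.

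For the inverse, I would run the identical argument with $\psi^{-1}$ in place of $\psi$: a basic closed set $\mathcal{A}_{f}\cap \mathcal{P}$ in $\mathcal{P}$ pulls back under $(\psi^{*})^{-1}$ (which is just $(\psi^{-1})^{*}$) to $\mathcal{B}_{\psi(f)}\cap \psi^{*}(\mathcal{P})$, and this is closed in $\psi^{*}(\mathcal{P})$. Combining bijectivity with continuity in both directions yields the homeomorphism.

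There is really no serious obstacle here; the lemma is almost a tautology once one unwinds the definitions. The only point one has to be slightly careful about is that a basic closed set in the \emph{subspace} $\mathcal{P}$ is of the form $\mathcal{A}_{f}\cap \mathcal{P}$ and not merely $\mathcal{A}_{f}$, so the preimage computation must keep track of the intersection with $\mathcal{P}$ (respectively $\psi^{*}(\mathcal{P})$) throughout; the computation above does this automatically since we start with $M\in\mathcal{P}$.
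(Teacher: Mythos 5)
Your proposal is correct and follows essentially the same route as the paper: define the map $M \mapsto \psi(M)$ on $\mathcal{P}$, observe bijectivity, and verify that basic closed sets of the hull-kernel subspace topologies pull back and push forward to basic closed sets via the computations $\mathcal{B}_{g}\cap\psi^{*}(\mathcal{P}) \mapsto \mathcal{A}_{\psi^{-1}(g)}\cap\mathcal{P}$ and $\mathcal{A}_{f}\cap\mathcal{P} \mapsto \mathcal{B}_{\psi(f)}\cap\psi^{*}(\mathcal{P})$. Your explicit care about intersecting with the subspace (rather than writing $\mathcal{A}_f$ alone) is if anything slightly more precise than the paper's own write-up.
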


\begin{proof}
Let $\psi:A(X)\rightarrow B(Y)$ be the isomorphism. Define $\phi:\mathcal{P}\rightarrow \psi^*{\mathcal{P}}$
by $\phi(M)=\psi(M).$ Therefore $\phi$ is clearly well-defined.
$\phi$ is evidently one-one and onto. Now we have to show that $\phi$ is closed
and continuous. Here $\mathcal{B}_{f}\cap\mathcal{\psi^*\mathcal{P}}$ is a
basic closed set in $\psi^*{\mathcal{P}}$ and $\phi^{-1}(\mathcal{B}_{f} \cap \psi^*\mathcal{P})=\phi^{-1}(\mathcal{B}_{f})\cap\phi^{-1}(\psi^*{\mathcal{P}})=\mathcal{A}_{\psi^{-1}(f)}\cap\mathcal{P}$,
which is a basic closed set in $\mathcal{P}$. Again $\phi(\mathcal{A}_{f}\cap\mathcal{P})=\mathcal{B}_{\psi(f)}\cap\mathcal{P}$.
Hence $\phi$ is closed and continuous. Therefore $\mathcal{P}$
and $\psi^*\mathcal{P}$ are homeomorphic.
\end{proof}

For $\mathcal{P}\subseteq \mathcal{M}(A)$, let $N_\mathcal{P}^A (X)$ be the set of all those points $x$ in $X$ for which $M_x^A$ is not $\mathcal{P}_A$-maximal ideal. We call $X$ to be locally-$\mathcal{P}_A$ if $N_\mathcal{P}^A (X) = \emptyset$. For $A(X) = C(X)$, we denote  $N_\mathcal{P}^A (X) = N_\mathcal{P} (X)$ and locally-$\mathcal{P}_A$ by locally-$\mathcal{P}$. For example if $\mathcal{P}$ be the class of all real maximal ideals of $C(X)$ , then $X$ is locally $\mathcal{P}$. If we take $\mathcal{P}$ to be family of all SRM ideal in $C(X)$, then $X$ is not locally $\mathcal{P}$.

\begin{lemma}
If $X$ is $\mathcal{P}_A$-compact then $X\backslash\mathcal{N}_{P}^A(X)$ and
$\mathcal{P}$ are homeomorphic.
\end{lemma}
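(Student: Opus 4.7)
The plan is to explicitly exhibit the homeomorphism $\phi : X\setminus N_{\mathcal P}^A(X)\to \mathcal P$ given by $\phi(x)=M_x^A$, and verify the four standard requirements (well-defined, bijective, continuous, closed) separately, using $\mathcal{P}_A$-compactness precisely at the two places where one needs the image of the map to exhaust $\mathcal{P}$.

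First I would check that $\phi$ is well defined and bijective. Well-definedness is immediate from the definition of $N_{\mathcal P}^A(X)$: if $x\notin N_{\mathcal P}^A(X)$ then $M_x^A$ is a $\mathcal{P}_A$-maximal ideal, i.e.\ an element of $\mathcal{P}$. Injectivity uses that $X$ is Tychonoff and $C^*(X)\subseteq A(X)$: given $x\neq y$, pick $f\in C^*(X)\subseteq A(X)$ with $f(x)=0$ and $f(y)\neq 0$, so that $f\in M_x^A\setminus M_y^A$. Surjectivity is where $\mathcal{P}_A$-compactness enters: any $M\in\mathcal P$ is, by hypothesis, fixed, so $M=M_x^A$ for some $x\in X$; since $M_x^A=M\in\mathcal P$, this $x$ automatically lies in $X\setminus N_{\mathcal P}^A(X)$.

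Next I would verify continuity by pulling back the basic closed sets $\mathcal A_f\cap\mathcal P$ of the hull-kernel topology on $\mathcal P$. A direct computation gives
\[
\phi^{-1}(\mathcal A_f\cap\mathcal P)=\{x\in X\setminus N_{\mathcal P}^A(X):f\in M_x^A\}=Z(f)\cap\bigl(X\setminus N_{\mathcal P}^A(X)\bigr),
\]
which is closed in the subspace $X\setminus N_{\mathcal P}^A(X)$. For the closed-map half I would again work with the basic closed sets. Since $X$ is Tychonoff and $C^*(X)\subseteq A(X)$, the family $\{Z(f):f\in A(X)\}$ is a closed-set base for $X$, so $\{Z(f)\cap(X\setminus N_{\mathcal P}^A(X)):f\in A(X)\}$ is a closed-set base on the domain, and because $\phi$ is a bijection it commutes with intersections. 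Hence it suffices to show that each such basic closed set has closed image. The image is $\{M_x^A\in\mathcal P:f(x)=0\}$, and this coincides with $\mathcal A_f\cap\mathcal P$: the inclusion $\subseteq$ is clear, while $\supseteq$ uses $\mathcal{P}_A$-compactness once more, which guarantees that every $M\in\mathcal A_f\cap\mathcal P$ is of the form $M_x^A$ for some $x\in X\setminus N_{\mathcal P}^A(X)$.

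I expect the only genuine subtlety to be identifying the exact role of $\mathcal{P}_A$-compactness, which appears in precisely two places: to ensure surjectivity of $\phi$, and to upgrade the inclusion $\phi\bigl(Z(f)\cap(X\setminus N_{\mathcal P}^A(X))\bigr)\subseteq \mathcal A_f\cap\mathcal P$ to an equality. Without $\mathcal{P}_A$-compactness, $\mathcal P$ could contain free $\mathcal{P}_A$-maximal ideals, so $\phi$ would fail to be surjective and its image would miss part of each $\mathcal A_f\cap\mathcal P$, breaking the closed-map argument. Everything else is bookkeeping with Tychonoff separation and the hull-kernel base.
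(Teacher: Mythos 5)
Your proposal is correct and follows essentially the same route as the paper: the same map $x\mapsto M_x^A$, bijectivity via $\mathcal{P}_A$-compactness, and continuity/closedness checked on the basic closed sets $\mathcal{A}_f\cap\mathcal{P}$ and $Z(f)\cap(X\setminus N_{\mathcal P}^A(X))$. You merely supply details the paper leaves implicit (injectivity from the Tychonoff property, and the observation that for a bijection it suffices to check closedness on a closed-set base).
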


\begin{proof}
Let $X$ be $\mathcal{P}_A$-compact, for $\mathcal{P}\subseteq \mathcal{M}(A)$. Then $\mathcal{P}=\{M_{x}^A|x\in X\backslash\mathcal{N}_{\mathcal{P}}^A(X)\}$.
Now define $\sigma:X\backslash\mathcal{N}_{\mathcal{P}}^A(X)\rightarrow\mathcal{P}$ by $\sigma(x)=M_{x}^A$. Then $\sigma$ is bijective. Let $y\in\sigma^{-1}(\mathcal{A}_{f}\cap\mathcal{P})\Leftrightarrow\sigma(y)\in\mathcal{A}_{f}\cap\mathcal{P}\Leftrightarrow M_{y}^A\in\mathcal{A}_{f}\cap\mathcal{P}\Leftrightarrow M_{y}^A\in\mathcal{A}_{f}$
and $M_{y}^A\in\mathcal{P}\Leftrightarrow f\in M_{y}^A$ and $y\notin\mathcal{N}_{\mathcal{P}}^A(X)\Leftrightarrow f(y)=0$
and $y\notin\mathcal{N}_{\mathcal{P}}^A(X)\Leftrightarrow y\in Z(f)$ and $y\notin\mathcal{N}_{\mathcal{P}}^A(X)\Leftrightarrow y\in Z(f)\cap(X\backslash\mathcal{N}_{\mathcal{P}}^A (X))$.
Therefore $\sigma^{-1}(\mathcal{A}_{f}\cap\mathcal{P})=Z(f)\cap(X\backslash\mathcal{N}_{\mathcal{P}}^A(X))$.
Also $\sigma(Z(f)\cap(X\backslash\mathcal{N}_{\mathcal{P}}^A(X)))=\mathcal{A}_{f}\cap\mathcal{P}$.
Hence $\sigma$ is a homeomorphism i.e $X\backslash\mathcal{N}_{\mathcal{P}}^A(X)$
and $\mathcal{P}$ are homeomorphic.
\end{proof}

Hence the following theorem is immediate.

\begin{theorem}\label{dgp}
Let $\psi: A(X) \to B(Y)$ be an isomorphism and $\mathcal{P} \subseteq \mathcal{M}(A)$. If $X$ is $\mathcal{P}_A$-compact and $Y$ is $\mathcal{\psi^*P}_B$-compact, then $X\backslash\mathcal{N}_{\mathcal{P}}^A(X)$ is homeomorphic with $Y\backslash\mathcal{N}_{\mathcal{P}}^B(Y)$.
\end{theorem}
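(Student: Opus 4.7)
The plan is to observe that the theorem is simply a composition of the two preceding lemmas, so my proof proposal will chain three homeomorphisms together. First, since $X$ is $\mathcal{P}_A$-compact, the previous lemma gives a homeomorphism $\sigma_X : X\setminus\mathcal{N}_{\mathcal{P}}^A(X) \to \mathcal{P}$ sending $x \mapsto M_x^A$. Next, the isomorphism $\psi : A(X) \to B(Y)$ lifts to a homeomorphism $\phi : \mathcal{P} \to \psi^*\mathcal{P}$ by the lemma on isomorphism-induced maps between hull-kernel subspaces. Finally, since $Y$ is $\psi^*\mathcal{P}_B$-compact, the previous lemma (applied now on the $Y$ side with $B(Y)$ in place of $A(X)$ and $\psi^*\mathcal{P}$ in place of $\mathcal{P}$) gives a homeomorphism $\sigma_Y : Y\setminus\mathcal{N}_{\psi^*\mathcal{P}}^B(Y) \to \psi^*\mathcal{P}$ sending $y \mapsto M_y^B$.

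The desired homeomorphism is then the composition $\sigma_Y^{-1}\circ \phi \circ \sigma_X : X\setminus\mathcal{N}_{\mathcal{P}}^A(X) \to Y\setminus\mathcal{N}_{\psi^*\mathcal{P}}^B(Y)$. Here I am using the notational convention fixed in the paragraph following the definition of algebraic sets, namely that when $\mathcal{P}$ is algebraic one writes $\mathcal{P}$ for its $\psi$-conjugate $\psi^*\mathcal{P}$ as well, so that $\mathcal{N}_{\mathcal{P}}^B(Y)$ in the statement is to be read as $\mathcal{N}_{\psi^*\mathcal{P}}^B(Y)$.

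There is essentially no obstacle here beyond bookkeeping: the two lemmas already do all of the work, and all three maps in the composition have been verified to be homeomorphisms, so their composition is again a homeomorphism. The only thing worth flagging is that the second application of the previous lemma requires $Y$ to be $\psi^*\mathcal{P}_B$-compact (so that $\psi^*\mathcal{P} = \{M_y^B : y \in Y\setminus\mathcal{N}_{\psi^*\mathcal{P}}^B(Y)\}$), which is precisely the hypothesis imposed on $Y$ in the theorem. Thus the proof reduces to writing out the composition and citing the two lemmas.
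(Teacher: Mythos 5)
Your proof is correct and follows exactly the paper's own route: compose the homeomorphism $X\setminus\mathcal{N}_{\mathcal{P}}^A(X)\to\mathcal{P}$ from the $\mathcal{P}_A$-compactness lemma, the homeomorphism $\mathcal{P}\to\psi^*\mathcal{P}$ induced by $\psi$, and the inverse of the corresponding homeomorphism on the $Y$ side. Your explicit remark that $\mathcal{N}_{\mathcal{P}}^B(Y)$ must be read as $\mathcal{N}_{\psi^*\mathcal{P}}^B(Y)$ is a welcome clarification of a point the paper leaves implicit.
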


\begin{proof}
As $X\backslash\mathcal{N}_{P}^A(X)$ is homeomorphic with $\mathcal{P}$ and $\mathcal{P}$ is homeomorphic with $\psi^*(P)$. Further as, $Y$ is $\mathcal{\psi^*P}_B$-compact, $\psi^*(P)$ is homeomorphic with $Y\backslash\mathcal{N}_{P}(Y)$, by transitivity, $X\backslash\mathcal{N}_{P}^A(X)$ and $Y\backslash\mathcal{N}_{P}^B(Y)$
\end{proof}

As already mentioned before that $\mathcal{P}$-compact or $\psi^*\mathcal{P}$-compact may not be topological property, But if we take $\mathcal{P}$ to be algebraic subset of $\mathcal{M}(X)$, then following theorem shows that $\mathcal{P}$-compactness is a topological property.

\begin{theorem}\label{sodepur}
Let $\mathcal{P}$ be a subset of $\mathcal{M}(X)$ which is algebraic. Then $\mathcal{P}$-compact is a topological property.
\end{theorem}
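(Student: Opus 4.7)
The plan is to unpack what the statement means and then reduce the verification to the single observation that a homeomorphism‐induced isomorphism on $C$ carries fixed ideals to fixed ideals. Recall that since $\mathcal{P}$ is algebraic, any isomorphism $\psi: C(X)\to C(Y)$ produces a distinguished subset $\psi^*(\mathcal{P})\subseteq \mathcal{M}(Y)$ which, under the author's convention, is again denoted $\mathcal{P}$. So ``$\mathcal{P}$-compact is topological'' amounts to: if $X$ is $\mathcal{P}$-compact and $h:X\to Y$ is a homeomorphism, then $Y$ is $\psi^*(\mathcal{P})$-compact for the isomorphism $\psi$ naturally induced by $h$.

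First I would fix notation: take $h:X\to Y$ a homeomorphism and define $\psi:C(X)\to C(Y)$ by $\psi(f)=f\circ h^{-1}$. This is a ring isomorphism. Next I would establish the key lemma that $\psi$ sends fixed maximal ideals bijectively to fixed maximal ideals, namely
\[
\psi(M_x^X)=\{f\circ h^{-1}:f(x)=0\}=\{g\in C(Y):g(h(x))=0\}=M_{h(x)}^Y.
\]
This is essentially a one‐line check and is the only place the topological hypothesis (as opposed to a bare algebraic isomorphism) is used.

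Then I would invoke algebraicity of $\mathcal{P}$ to obtain the conjugate $\mathcal{Q}:=\psi^*(\mathcal{P})\subseteq \mathcal{M}(Y)$. Pick any $N\in \mathcal{Q}$; by definition of $\psi^*$ there is some $M\in \mathcal{P}$ with $N=\psi(M)$. Since $X$ is $\mathcal{P}$-compact, $M$ is fixed, so $M=M_x^X$ for some $x\in X$. By the key lemma, $N=\psi(M_x^X)=M_{h(x)}^Y$, which is fixed. Hence every element of $\mathcal{Q}$ is fixed, i.e.\ $Y$ is $\mathcal{Q}$-compact, and by the naming convention this is exactly $\mathcal{P}$-compactness of $Y$.

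The only subtlety worth flagging is consistency of notation: algebraicity of $\mathcal{P}$ guarantees the existence of a conjugate set in $\mathcal{M}(Y)$ for \emph{every} isomorphism $C(X)\cong C(Y)$, but here we are free to use the specific isomorphism $\psi$ induced by $h$, for which $\psi^*(\mathcal{P})$ is explicit. No extra work is needed to compare this with conjugates coming from other isomorphisms — the algebraicity hypothesis is used purely to legitimize writing ``$\mathcal{P}$'' on the $Y$-side. Thus the main (and essentially only) content of the proof is the verification $\psi(M_x^X)=M_{h(x)}^Y$; everything else is bookkeeping.
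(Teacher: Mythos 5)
Your proposal is correct and follows essentially the same route as the paper: both induce the isomorphism $\psi(f)=f\circ h^{-1}$ from the homeomorphism, use algebraicity only to identify the conjugate set on the $Y$-side, and reduce everything to the fact that $\psi$ carries the fixed ideal at $x$ to the fixed ideal at $h(x)$. The only cosmetic difference is that you compute $\psi(M_x^X)=M_{h(x)}^Y$ directly, whereas the paper establishes the inclusion $M_{h(x)}^Y\subseteq\psi(M_x^X)$ and then invokes maximality.
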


\begin{proof}
Let $\mathcal{P}$ be an algebraic set in $\mathcal{M}(X)$. Suppose $X$ is homeomorphic with $Y$.  Let $\sigma$ be the homeomorphism. $\psi_\sigma: C(X) \to C(Y)$ defined by $\psi_\sigma (f) = f\circ \sigma^{-1}$ is an isomorpism. Let $M$ be a $\mathcal{P}$-maximal ideal in $C(Y)$, then there exists a unique $\mathcal{P}$-maximal ideal $N\in C(X)$ such that $\psi^*(N) = M$ as by definition of algebraic set, $\psi^*(\mathcal{P}) = \mathcal{P}$. Since $X$ is $\mathcal{P}$-compact, $N=N_x$. Let $f\in M_{\sigma (x)}$. Then $f(\sigma (x))=0$. Thus $f\circ \sigma \in N_x$,  $\psi_{\sigma}(f\circ \sigma)\in M$. But $\psi_{\sigma}(f\circ \sigma)= f\circ \sigma \circ \sigma^{-1} = f$. So $f \in M$. Thus $M_{\sigma (x)} \subseteq M$ and due to maximality
$M = M_{\sigma(x)}$. Hence every $\mathcal{P}$-maximal ideal is fixed. So $Y$ is $\mathcal{P}$-compact.

\end{proof}

\begin{theorem}\label{bangalore}
Let $\mathcal{P}$ be an algebraic set. $X$ is locally $\mathcal{P}$ and $X$ is homeomorphic with $Y$, then $Y$ is locally $\mathcal{P}$.
\end{theorem}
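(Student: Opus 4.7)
The plan is to mirror the proof of Theorem~\ref{sodepur} almost verbatim, with ``fixed maximal ideal is $\mathcal{P}$-maximal'' replacing ``$\mathcal{P}$-maximal ideal is fixed''. Given the homeomorphism $\sigma\colon X\to Y$, I would form the induced ring isomorphism $\psi_\sigma\colon C(X)\to C(Y)$ defined by $\psi_\sigma(f)=f\circ\sigma^{-1}$, and then transport the condition $\mathcal{N}_{\mathcal{P}}(X)=\emptyset$ across to $Y$ using that $\mathcal{P}$ is algebraic.

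The execution proceeds in four short steps. First, pick an arbitrary $y\in Y$ and write $y=\sigma(x)$ for the unique preimage $x\in X$; the goal is to show that $M_y$ lies in (the conjugate of) $\mathcal{P}$ in $\mathcal{M}(Y)$, so that $y\notin\mathcal{N}_{\mathcal{P}}(Y)$. Second, unwind the definitions to see that $\psi_\sigma(M_x)=M_{\sigma(x)}=M_y$: indeed $g\in\psi_\sigma(M_x)$ iff $g=f\circ\sigma^{-1}$ for some $f$ with $f(x)=0$, which holds iff $g(\sigma(x))=0$. Third, invoke the algebraic hypothesis: because $\psi_\sigma$ is an isomorphism between $C(X)$ and $C(Y)$, $\psi_\sigma^*(\mathcal{P})$ is the $\psi_\sigma$-conjugate of $\mathcal{P}$, which, by the naming convention fixed just after the definition of algebraic set, is again denoted $\mathcal{P}$; hence $M_x\in\mathcal{P}$ forces $M_y=\psi_\sigma(M_x)\in\mathcal{P}$. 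Finally, since $y$ was arbitrary, $\mathcal{N}_{\mathcal{P}}(Y)=\emptyset$, which is exactly the statement that $Y$ is locally $\mathcal{P}$.

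There is no serious obstacle here; the proof is a straight adaptation of Theorem~\ref{sodepur}. The only point requiring care is bookkeeping: the symbol $\mathcal{P}$ is being used simultaneously for two different (but conjugate) subsets of $\mathcal{M}(X)$ and $\mathcal{M}(Y)$, and the algebraic hypothesis must be applied to the specific isomorphism $\psi_\sigma$ induced by the given homeomorphism $\sigma$, rather than to some arbitrary isomorphism that might fail to respect the bijection between fixed maximal ideals at $x$ and $\sigma(x)$. Once one observes that $\psi_\sigma$ does send $M_x$ to $M_{\sigma(x)}$, the rest is a definitional triviality.
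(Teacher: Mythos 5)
Your proposal is correct and follows essentially the same route as the paper: both transport the fixed maximal ideal along the induced isomorphism $\psi_\sigma(f)=f\circ\sigma^{-1}$, identify $\psi_\sigma(M_x)=M_{\sigma(x)}$, and invoke algebraicity of $\mathcal{P}$ to conclude membership of $M_y$ in the conjugate of $\mathcal{P}$. Your write-up is in fact more explicit than the paper's (which merely cites ``the proof of the above theorem''), but there is no difference in substance.
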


\begin{proof} Let $M_y^Y$ be a fixed maimal ideal in $C(Y)$. As $X$ is homeomorphic with $Y$, by the proof of the above theorem,  this homomorphism induces a canonical isomorphism between $C(X)$ and $C(Y)$ which takes back the fixed maximal ideal $M_y^Y$ to a fixed maximal ideal of $C(X)$. As $X$ is locally $\mathcal{P}$ and $\mathcal{P}$ is algebraic, $M_y^Y\in \mathcal{P}$. Hence $Y$ is locally $\mathcal{P}$.
\end{proof}

The above two theorems hereby justify our convention to denote conjugates of algebraic sets by same symbol.

\

The next theorem is important to us and is direct consequence of theorem \ref{dgp}, as special case for $A(X)=C(X)$.

\begin{theorem}
If $X$, $Y$ are $\mathcal{P}$-compact spaces for an algebraic set $\mathcal{P}$ and $C(X)$ is isomorphic with
$C(Y)$ then \textup{$X\backslash\mathcal{N}_{\mathcal{P}}(X)$ and $Y\backslash\mathcal{N}_{\mathcal{P}}(Y)$.
are homeomorphic.}
\end{theorem}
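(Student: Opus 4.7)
The plan is to derive this as an immediate specialisation of Theorem \ref{dgp} to the case $A(X)=C(X)$ and $B(Y)=C(Y)$. First I would fix an isomorphism $\psi : C(X) \to C(Y)$ produced by the hypothesis and lift it to the map $\psi^*$ on subsets of maximal ideal spaces, as set up just before Theorem \ref{dgp}. The only real work then consists in matching the hypotheses of that theorem against what is actually given.

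The key observation is that $\mathcal{P}$ is algebraic. By the definition of ``algebraic'' in the paper, there exists a conjugate $\mathcal{Q} \subseteq \mathcal{M}(Y)$ with $\psi^*(\mathcal{P}) = \mathcal{Q}$, and by the naming convention justified by Theorems \ref{sodepur} and \ref{bangalore}, this conjugate is denoted by the same symbol $\mathcal{P}$. In particular the hypothesis ``$Y$ is $\mathcal{P}$-compact'' is exactly the hypothesis ``$Y$ is $\psi^*\mathcal{P}_B$-compact'' required by Theorem \ref{dgp}, and similarly ``$X$ is $\mathcal{P}$-compact'' supplies the required ``$X$ is $\mathcal{P}_A$-compact'' condition. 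With these identifications in place, Theorem \ref{dgp} applies verbatim and produces a homeomorphism between $X \setminus \mathcal{N}_{\mathcal{P}}(X)$ and $Y \setminus \mathcal{N}_{\mathcal{P}}(Y)$, which is exactly what is to be proved.

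The proof is therefore essentially a one-line invocation of Theorem \ref{dgp}, and the main (and really only) obstacle is conceptual rather than computational: one must be comfortable with the deliberate overloading of the symbol $\mathcal{P}$ to simultaneously denote the originally chosen algebraic subset of $\mathcal{M}(X)$ and its canonical conjugate $\psi^*\mathcal{P}$ in $\mathcal{M}(Y)$. Once this notational convention is accepted, no further computation is needed, since the underlying work has already been carried out in the two intermediate homeomorphism lemmas (between $X\setminus\mathcal{N}_\mathcal{P}(X)$ and $\mathcal{P}$, and between $\mathcal{P}$ and $\psi^*\mathcal{P}$) and transitively assembled in Theorem \ref{dgp} itself.
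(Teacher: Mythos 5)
Your proposal is correct and follows exactly the paper's own route: the paper proves this theorem as the special case $A(X)=C(X)$, $B(Y)=C(Y)$ of Theorem \ref{dgp}, using the algebraicity of $\mathcal{P}$ to identify $\psi^*\mathcal{P}$ with its conjugate (denoted by the same symbol) so that the hypothesis that $Y$ is $\mathcal{P}$-compact matches the $\psi^*\mathcal{P}$-compactness required there. Your explicit attention to the notational overloading of $\mathcal{P}$ is exactly the point the paper's convention (justified by Theorems \ref{sodepur} and \ref{bangalore}) is meant to handle.
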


The following theorem estblishes that if $\mathcal{P}$ is an algebraic set containg all fixed maximal ideal, the if $X$ is $\mathcal{P}$-compact, $C(X)$ determines $X$ too.

\begin{theorem}
Let $\mathcal{P}$ be an algebraic set and $C(X)$ is isomorphic with $C(Y)$. If $X$ and $Y$ both are locally $\mathcal{P}$ and $\mathcal{P}$-compact, then $X$ is homeomorphic with $Y$.
\end{theorem}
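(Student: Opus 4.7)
The plan is to observe that this theorem is essentially a corollary of the immediately preceding theorem, combined with the definition of locally $\mathcal{P}$. Since the hard work is already packaged in Theorem \ref{dgp} (and its $A(X)=C(X)$ specialization), the proof should be short and primarily bookkeeping.

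First I would invoke the previous theorem: because $\mathcal{P}$ is algebraic, $C(X)\cong C(Y)$, and both $X$ and $Y$ are $\mathcal{P}$-compact, we obtain a homeomorphism
\[
X \setminus \mathcal{N}_{\mathcal{P}}(X) \;\cong\; Y \setminus \mathcal{N}_{\mathcal{P}}(Y).
\]
Here I should emphasize why the hypothesis on $Y$ aligns with Theorem \ref{dgp}: that earlier theorem required $Y$ to be $\psi^{*}\mathcal{P}$-compact, but since $\mathcal{P}$ is algebraic we have $\psi^{*}\mathcal{P}=\mathcal{P}$ (this is exactly the justification, via Theorems \ref{sodepur} and \ref{bangalore}, for denoting the conjugate by the same symbol).

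Next I would unpack the locally $\mathcal{P}$ hypothesis. By definition, $\mathcal{N}_{\mathcal{P}}(X)$ is the set of points $x\in X$ for which $M_x$ fails to be $\mathcal{P}$-maximal, so $X$ locally $\mathcal{P}$ means precisely $\mathcal{N}_{\mathcal{P}}(X)=\emptyset$, and similarly $\mathcal{N}_{\mathcal{P}}(Y)=\emptyset$. Substituting these into the homeomorphism above yields $X\cong Y$.

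There is no real obstacle: the substantive content sits in the algebraic-set machinery and in Theorem \ref{dgp}, which supplies the homeomorphism between the complements of the exceptional sets; locally $\mathcal{P}$ merely kills those exceptional sets. The only point that deserves a sentence of care is making sure the reader sees that ``algebraic'' is what lets us identify the conjugate of $\mathcal{P}$ under the given isomorphism with $\mathcal{P}$ itself, so that ``$Y$ is $\mathcal{P}$-compact'' in the hypothesis is interchangeable with ``$Y$ is $\psi^{*}\mathcal{P}$-compact'' in the statement of Theorem \ref{dgp}.
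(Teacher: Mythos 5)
Your proposal is correct and follows exactly the paper's route: the paper simply states that the result ``directly follows from the previous theorem,'' i.e.\ the homeomorphism $X\setminus\mathcal{N}_{\mathcal{P}}(X)\cong Y\setminus\mathcal{N}_{\mathcal{P}}(Y)$ combined with the observation that locally $\mathcal{P}$ makes both exceptional sets empty. Your version merely spells out the details (including the identification $\psi^{*}\mathcal{P}=\mathcal{P}$ for algebraic $\mathcal{P}$) that the paper leaves implicit.
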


Proof directly follows from the previous theorem.

Now we can easily interpret all the three results as mentioned in the begining of this paper as a particular case.

\textbf{Banach-Stone theorem}: As we have already observed that if we choose $\mathcal{P}$ to be the collection of all maximal ideals, then $\mathcal{P}$ is an algebraic set. Clearly $X$ is locally-$\mathcal{P}$ and $\mathcal{P}$-compact spaces are precisely the compact spaces. So if $X$ is compact space, then $C(X)$ determines $X$.

\textbf{E. Hewitt}: If we take $\mathcal{P}$ to be the collection of all real maximal ideals. Then $\mathcal{P}$ is an algebraic set and $X$ is also locally -$\mathcal{P}$. Then it follows that if $X$ is realcompact, then $C(X)$ determines $X$ too.

\textbf{M. Henriksen and B. Mitra}: If we choose $\mathcal{P}$ to be the collection of SRM ideals, then $\mathcal{P}$ is an algebraic set. Then $X$ is locally-$\mathcal{P}$ if and only $X$ is locally compact. Thus we conclude that if $X$ locally compact and nearly realcompact, then $C(X)$ also determines $X$.

 We call in general a maximal ideal $M$ of a commutative ring $A$ with unity is $B$-real maximal ideal, where $B$ is a subring of $A$ containing the unity of $A$ if $M \cap B$ is a maximal ideal of $B$. If $A(X)$ and $B(X)$ are intermediate subrings of $C(X)$ with $A(X)$ being subring of $B(X)$, then we call $X$ to be $B-A$-compact if every $A$-real maximal ideal of $B$ is fixed. It  is clear from [\cite{gj60}, theorem 7.9(c)], that real maximal ideals are precisely $C^*$-real maximal ideal of $C(X)$ and $C-C^*$-compact is precisely realcompact space. For any intermediate subring $A(X)$ of $C(X)$, real maximal ideal of $A(X)$ is precisely $C^*(X)$-real maximal ideal of $A(X)$ by [\cite{bw91}, corrollary 3.8]. So $C^*-A$-compact is precisely the $A$-compact spaces.

\textbf{Redlin and Watson}: If we take $\mathcal{P}$ to be the family of real maximal ideal of $A(X)$. Then $\mathcal{P}$ is an algebraic set. Clearly $X$ is locally $\mathcal{P}$. If $A(X)$ is isomorphic with $B(Y)$, $\mathcal{P}$-compactness of $X$ and $Y$ are respectively $A$-compactness of $X$ and $B$-compactness of $Y$. So theorem \ref{delhi} follows.

Now we shall try to build up another structurally similar example which is different from previous examples. In \cite{ma05} Mitra and Acharyya introduced a subring of $C(X)$ containing $C^*(X)$, referred as $\chi (X)$ which is the smallest subring of $C(X)$ containing $C^*(X)$ and $C_H (X)$. It is clear that if we choose $\mathcal{P}$ to be the collection of $\chi$-real maximal ideals of $C(X)$, then $\mathcal{P}$ is an algebraic set. $X$ is trivially locally $\mathcal{P}$. Then we have the following result.

\begin{theorem}
If $X$ and $Y$ are $\chi$-realcompact space, Then $C(X)$ isomorphic with $C(Y)$ implies that $X$ is homeomorphic with $Y$.
\end{theorem}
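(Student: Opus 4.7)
The plan is to recognise the statement as an immediate corollary of the preceding theorem, applied with $\mathcal{P}$ taken to be the family of $\chi$-real maximal ideals of $C(X)$, i.e.\ those $M \in \mathcal{M}(X)$ for which $M \cap \chi(X)$ is maximal in $\chi(X)$. Once the three hypotheses---that $\mathcal{P}$ is algebraic, that $X$ and $Y$ are locally $\mathcal{P}$, and that both are $\mathcal{P}$-compact---are verified, the conclusion $X \cong Y$ follows at once.

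First I would establish that $\mathcal{P}$ is algebraic. The heart of this is the claim that any isomorphism $\sigma : C(X) \to C(Y)$ restricts to a ring isomorphism $\chi(X) \to \chi(Y)$. Since $\sigma$ preserves boundedness (noted in the preliminaries), $\sigma(C^{*}(X)) = C^{*}(Y)$; and since hardness of $\mathrm{cl}_X(\mathrm{coz}\,f)$ can be detected via the induced homeomorphism of structure spaces, one also gets $\sigma(C_H(X)) = C_H(Y)$. Because $\chi$ is by definition the smallest subring containing $C^{*} \cup C_H$, this forces $\sigma(\chi(X)) = \chi(Y)$. It then follows that $M \cap \chi(X)$ is maximal in $\chi(X)$ if and only if $\sigma(M) \cap \chi(Y)$ is maximal in $\chi(Y)$, so $\sigma^{*}$ sends $\mathcal{P}$ onto the family of $\chi$-real maximal ideals of $C(Y)$ independently of the choice of $\sigma$.

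Next, both $X$ and $Y$ are locally $\mathcal{P}$ essentially by definition: for each $x \in X$, the contraction $M_x \cap \chi(X) = \{f \in \chi(X) : f(x) = 0\}$ is the fixed maximal ideal of $\chi(X)$ at $x$ and is therefore maximal, so $M_x \in \mathcal{P}$ and $\mathcal{N}_{\mathcal{P}}(X) = \emptyset$. Finally, $\mathcal{P}$-compactness is a verbatim restatement of the hypothesis: $X$ is $\chi$-realcompact precisely when every $\chi$-real maximal ideal of $C(X)$ is fixed, and similarly for $Y$. Invoking the main theorem then delivers $X \cong Y$.

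The main obstacle is the first step: arguing rigorously that $\sigma$ carries $\chi(X)$ onto $\chi(Y)$. The bounded part $C^{*}$ is handled effortlessly, but translating ``$\mathrm{cl}_X(\mathrm{coz}\,f)$ is hard in $X$'' into a purely ring-theoretic condition on $f$---so that the property transfers along the homeomorphism $\sigma^{*} : \mathcal{M}(X) \to \mathcal{M}(Y)$ of structure spaces---requires a careful passage between the topology of $\beta X$ and the hull-kernel topology of $\mathcal{M}(X)$. Once this invariance is in place, the remainder of the proof is a purely formal application of the framework developed in this section.
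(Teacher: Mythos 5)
Your proposal takes essentially the same route as the paper, which offers no written proof at all: it simply asserts that the family of $\chi$-real maximal ideals is an algebraic set, that $X$ is ``trivially'' locally $\mathcal{P}$, and then invokes the preceding general theorem. You supply more justification than the paper does---in particular you correctly identify the invariance of $\chi(X)$ under an arbitrary isomorphism $C(X)\to C(Y)$ (i.e.\ the ring-theoretic detectability of hardness of $\mathrm{cl}_X(\mathrm{coz}\,f)$) as the one genuinely nontrivial step, which the paper leaves entirely to the reader.
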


Since in hard pseudocompact space, $\chi(X) = C(X)$, $\chi$-real maximal ideals are precisely all maximal ideal and therfore it is obvious that $\chi$-realcompact and hard pseudocompact implies compactness and on the otherhand, if $X$ is nearly pseudocompact, $\chi (X) = C^*(X)$, $\chi$-real maximal ideals are precisely real maximal ideals and hence $\chi$-realcompact and nearly pseudocmpact implies realcompactness. \\

\
\textbf{Acknowledgements:} The authors sincerely acknowledge the support received
from DST FIST programme (File No. SR/FST/MSII/2017/10(C))

\end{document}